\date{}
\newcommand{\NN}{\mathbb N}
\newcommand{\ZZ}{\mathbb Z}
\newcommand{\fB}{\mathfrak B}
\newcommand{\fC}{\mathfrak C}
\newcommand{\fW}{\mathfrak W}
\newcommand{\fS}{\mathfrak S}
\newcommand{\Aut}{\operatorname{Aut}}
\newcommand{\lcm}{\operatorname{lcm}}
\newcommand{\Stab}{\operatorname{Stab}}
\theoremstyle{plain}
\newtheorem{prop}{Proposition}[section]
\newtheorem{theorem}[prop]{Theorem}
\newtheorem{corollary}[prop]{Corollary}
\newtheorem{lemma}[prop]{Lemma}
\theoremstyle{definition}
\newtheorem{remark}[prop]{Remark}
\newtheorem{example}[prop]{Example}
\newtheorem{definition}[prop]{Definition}
\author{L. Giuzzi$\,^{*}$ \and A. Pasotti\thanks{
    {\tt luca.giuzzi@ing.unibs.it},
    {\tt anita.pasotti@ing.unibs.it},
    Dipartimento di Matematica,
    Facolt\`a di Ingegneria,
    Universit\`a degli Studi di Brescia,
    Via Valotti 9,
    I-25133 Brescia (IT).
    This
    research has been partially supported by ``Fondazione Tovini'',
    Brescia.}
}
\title{Sampling complete designs}
\begin{document}
\maketitle
\begin{abstract}
Suppose $\Gamma'$ to be of a subgraph of a graph $\Gamma$.
We define a \emph{sampling} of a $\Gamma$-design ${\fB}=(V,B)$ into a
$\Gamma'$-design ${\fB'}=(V,B')$ as a surjective map $\xi:B\to B'$
mapping any block of $B$ into one of its subgraphs.
A sampling will be called {\it regular} when the number of
preimages of any block of $B'$ under $\xi$ is a constant.
This new
concept is closely related with the classical notion of
\emph{embedding}, which  has been extensively studied,
for many classes of graphs,
by several authors; see, for example, the survey \cite{Q2002a}.
Actually, a sampling $\xi$ might induce several embeddings
of the design $\fB'$ into $\fB$, although the converse is not
true in general.
In the present paper we study in more detail the behaviour of
samplings of $\Gamma$--complete designs of order $n$ into
$\Gamma'$--complete designs of the same order and show how
the natural necessary condition for the existence of a
regular sampling is actually sufficient. We also provide
some explicit constructions of samplings, as well as
propose further generalizations.
\end{abstract}

\noindent {\bf Keywords:} sampling; embedding, (complete) design.
\par\noindent {\bf MSC(2010):} 05B30, 05C51, 05E18.

\section{Introduction}
Denote by $K_n$ the complete graph on $n$ vertices, and assume
$\Gamma\leq K_n$ to be a subgraph of $K_n$.
Write $\,^{\lambda}K_n$ for the multigraph obtained from $K_n$ by
repeating each of its edges exactly $\lambda$ times.
 A
\emph{$(\,^{\lambda}K_n,\Gamma)$--design}  is a set $\fB$ of
graphs, called \emph{blocks}, isomorphic to $\Gamma$ and
partitioning the edges of $\,^{\lambda}K_n$. Recall that an
automorphism  of $\fB$ is a permutation of the vertices of $K_n$
leaving $\fB$ invariant. These designs, especially those with
$\lambda=1$ and endowed with a rich automorphism group, are a
topic of current research; several constructions, as well as
existence results, are known; see, for instance, \cite{BO,BZ,AP}.
We generalize the classical concept of
 $(v,k)$--complete design, see \cite{Cam,MR}, to
the context of graph decompositions.
\begin{definition}
\label{deD}
  Suppose $\Gamma\leq K_n$.
  By a \emph{$(K_n,\Gamma)$--complete design} we mean
  the set $K_n(\Gamma)$ consisting of all subgraphs of
  $K_n$ isomorphic to $\Gamma$.
\end{definition}
\noindent Clearly, a $(K_v,K_k)$--complete design is just a
$(v,k)$--complete design.
\par
The main topic of this paper is the new concept of \emph{sampling}
of designs; we propose the following definition.
\begin{definition}
Given a $(\,^{\lambda}K_n,\Gamma)$--design $\fB$ and a
$(\,^{\mu}K_v,\Gamma')$--design
$\fB'$ with $\Gamma'\leq\Gamma$, a \emph{$\fB'$--sampling} of
$\fB$ is a surjective function $\xi:\fB\to\fB'$ such that
$\xi(B)\leq B$, for any $B\in\fB$.
\end{definition}
\noindent  When $n=v$ and $\lambda=\mu=1$, we shall speak simply
of $\Gamma'$--samplings of $\fB$. In general, to prove the
existence of samplings of arbitrary designs is an interesting yet
difficult problem. In the present paper,
 we shall be concerned with
 samplings of complete designs.
\par
Our notion of sampling is closely related to that of embedding of
designs. We recall that an \emph{embedding} of $\fB'$ into $\fB$,
see \cite{Q2002a}, is a function $\psi:\fB'\to\fB$ such that for
any $G\in\fB'$,
\[ G\leq \psi(G). \]
In recent years, embeddings have been extensively investigated and
several results have been obtained for various classes of graphs;
see for instance, \cite{CLQ,GQ,LQR,LCQ,MQ95,MQ99,M,Q2002,Q2003}.
An injective embedding is called \emph{strict}. Note that given a
sampling $\xi:K_n(\Gamma)\to K_n(\Gamma')$, there is always a
strict embedding $\psi:K_n(\Gamma')\to K_n(\Gamma)$ such that
$\xi\psi$ is the identity on $K_n(\Gamma')$. However, the reverse,
namely that any strict embedding induces a sampling, is not true,
unless the embedding is taken to be bijective.

Samplings are also related to nestings of cycle systems;
see Section \ref{nc} for some details.


We also propose this new definition for samplings between complete
designs.
\begin{definition}
A \emph{regular $\Gamma'$--sampling} of the complete design
$K_n(\Gamma)$
is a sampling $\xi:K_n(\Gamma)\to K_n(\Gamma')$
such that the number of preimages of any
$G\in K_n(\Gamma')$ is a constant $\lambda>0$.
Such a sampling is said to have \emph{redundancy $\lambda$.}
\end{definition}
Observe that constructing a regular $\Gamma'$--sampling of
redundancy $\lambda$ is the same as to extract from every block of
$K_n(\Gamma)$ a subgraph isomorphic to $\Gamma'$ in such a way as
to cover the set $K_n(\Gamma')$ exactly $\lambda$ times.

We propose an analogous definition for embeddings.
\begin{definition}
  A \emph{$\lambda$--fold regular embedding} of $K_n(\Gamma')$ in $K_n(\Gamma)$
  is an embedding $\psi:K_n(\Gamma')\to K_n(\Gamma)$ such that
  any $G\in K_n(\Gamma)$ has  $\lambda$ preimages
  under $\psi$.
\end{definition}
\noindent The main result of the present paper is contained in
Theorem \ref{mainTG}: it shows that the natural necessary
conditions for the existence of regular $\Gamma'$--samplings and
$\lambda$--fold regular embeddings for $K_n(\Gamma)$ are also
sufficient. Theorem \ref{Ref} shows that when the hypotheses of
Theorem \ref{mainTG} are not fulfilled, there might still exist,
in some cases, samplings with some regularity.

In Section \ref{s4},  we shall
provide some direct constructions of regular samplings, using
suitable automorphism groups, and focusing our attention to the
case in which both $\Gamma$ and $\Gamma'$ are complete graphs. We
will also purpose a generalisation of the notion of
$(K_n,\Gamma)$--complete design to arbitrary graphs.

\section{Samplings and nestings}
\label{nc}
In this section we use standard notations of graph theory;
see \cite{H}.
In particular, by $C_m$ we mean the cycle of length $m$,
whereas $S_{m+1}$ denotes the star with $m$ rays and $m+1$
vertices; with $W_{m+1}$ we write the wheel with $m+1$
vertices, that is the graph obtained from a cycle $C_m$,
by adding a further vertex adjacent to all the preexisting
ones.

An \emph{$m$--cycle system $\fC$ of order $n$} is just a
$(K_n,C_m)$--design; see \cite{BR}. A \emph{nesting} of $\fC$ is a
function $f:\fC\to V(K_n)$ such that
\[ \fS=\left\{\{x,f(C)\}\,|\, C\in \fC, x\in V(C) \right\} \]
is a $(K_n,S_{m+1})$--design.

Nestings of cycle systems have been extensively studied; see
\cite{GL,GMR,LRS89,LRS,LS,RS,S}.
Observe that, by construction, a nesting of $\fC$ always
determines a bijection
$g:\fC\to\fS$.
We may  consider the
set
\[ \fW=\{ C\cup g(C)\,|\, C\in \fC \}. \]
All elements of $\fW$ are wheels $W_{m+1}$. It is immediate to see
that $\fW$ is a $(\,^2K_n,W_{m+1})$--design. Since both $C_m\leq
W_{m+1}$ and $S_{m+1}\leq W_{m+1}$, there exists at least two
samplings, $\xi_1:\fW\to\fC$ and $\xi_2:\fW\to\fS$. Actually, it
is possible to reconstruct the nesting from just $\xi_2$, as shown
in the following proposition.
\begin{prop}
\label{pp}
  Suppose $\fW$ to be a $(\,^2K_n,W_{m+1})$--design and let
  $\fS$ be a $(K_n,S_{m+1})$--design.
  There exists a sampling $\xi:\fW\to\fS$ if, and only if,
  there is a nesting of an
  $m$--cycle system $\fC$ of order $n$.
\end{prop}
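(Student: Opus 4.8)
The plan is to set up a dictionary between samplings $\xi\colon\fW\to\fS$ and nestings of $m$--cycle systems of order $n$, so that each direction of the biconditional merely translates one structure into the other. The ``if'' direction needs no new argument: it is exactly the construction recalled immediately before the statement. From a nesting $f$ of an $m$--cycle system $\fC$ of order $n$ one obtains the $(K_n,S_{m+1})$--design $\fS=\{\{x,f(C)\}\mid C\in\fC,\ x\in V(C)\}$, the induced bijection $g\colon\fC\to\fS$, the $(\,^2K_n,W_{m+1})$--design $\fW=\{C\cup g(C)\mid C\in\fC\}$, and the sampling sending $C\cup g(C)$ to $g(C)$; this gives a $(\,^2K_n,W_{m+1})$--design, a $(K_n,S_{m+1})$--design, and a sampling between them.

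For the ``only if'' direction, assume $\xi\colon\fW\to\fS$ is a sampling. First I would observe, by a double count, that $\xi$ is actually a bijection: a block of $\fW$ has $2m$ edges and $\,^2K_n$ has $n(n-1)$ of them, while a block of $\fS$ has $m$ edges and $K_n$ has $n(n-1)/2$, so $|\fW|=|\fS|=n(n-1)/(2m)$, and a surjection between finite sets of equal size is bijective. Then comes the structural step: for a block $B\in\fW$, the subgraph $\xi(B)\cong S_{m+1}$ has its centre at a vertex of degree $\ge m$ in $B\cong W_{m+1}$. When $m\ge4$ the hub is the only such vertex, so $\xi(B)$ is the spoke--star and $C_B:=B\setminus\xi(B)$ is precisely the rim, an $m$--cycle whose vertex set avoids the hub; for $m=3$, where $W_4=K_4$, one checks directly that deleting any $S_4$ from $K_4$ leaves a triangle disjoint from that star's centre. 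Writing $h_B$ for the centre of $\xi(B)$, in every case $h_B\notin V(C_B)$ and $\xi(B)=\{\{x,h_B\}\mid x\in V(C_B)\}$.

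Finally I would assemble $\fC=\{C_B\mid B\in\fW\}$ together with $f(C_B)=h_B$ and verify these form a nested $m$--cycle system. Since $E(B)=E(\xi(B))\uplus E(C_B)$ for each $B$, and since by the bijectivity of $\xi$ the graphs $\xi(B)$ run over all the blocks of $\fS$ and hence cover each edge of $K_n$ exactly once, summing over $B\in\fW$ shows that the multiset $\biguplus_{B}E(C_B)$ is the edge multiset of $\,^2K_n$ with one copy of each edge removed, i.e. it covers each edge of $K_n$ exactly once. Hence the $C_B$ are pairwise edge--disjoint (so $B\mapsto C_B$ is injective and $f$ is well defined) and $\fC$ is an $m$--cycle system of order $n$; moreover $\{\{x,f(C)\}\mid C\in\fC,\ x\in V(C)\}=\{\xi(B)\mid B\in\fW\}=\fS$ is a $(K_n,S_{m+1})$--design, so $f$ is a nesting of $\fC$. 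The one delicate point is the structural step identifying $\xi(B)$ with the spoke--star of $B$ (equivalently $C_B$ with the rim): this is where the containment $S_{m+1}\le W_{m+1}$ is genuinely used, and where the degenerate wheel $W_4=K_4$ forces a separate — but routine — verification.
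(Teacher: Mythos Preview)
Your proof is correct and follows essentially the same route as the paper: define $C_B=B\setminus\xi(B)$, take $\fC=\{C_B\}$, and let $f$ send $C_B$ to the hub of $B$. You have simply made explicit several points the paper leaves to the reader---the bijectivity of $\xi$ via an edge count, the reason $\xi(B)$ must be the spoke--star (with the separate check for $m=3$), and the verification that the $C_B$ partition $K_n$---but the underlying argument is the same.
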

\begin{proof}
  For any $W\in\fW$, let $\zeta(W)=W\setminus\xi(W)$.
  Clearly $\zeta(W)$ is always an $m$--cycle.
  Since $\fW$
  is a $(\,^2K_n,W_{m+1})$--design and
  $\fS$ is a $(K_n,S_{m+1})$--design, the set
  \[ \fC=\{ \zeta(W)\,|\,W\in\fW \} \]
  is an $m$--cycle system of order $n$.
  Define now $f:\fC\to V(K_n)$ as the function
  which sends any cycle $C=\zeta(W)$ into
  the centre of the wheel $W$.
  The function $f$ is a nesting, as required.
\end{proof}
\begin{remark}
In the proof of Proposition \ref{pp}, it is essential
to have that $\zeta(W)=W\setminus\xi(W)$ is a cycle $C_m$.
In general, any wheel $W_{m+1}$ contains $m+1$ cycles $C_m$.
However, for $m>3$ only one of these cycles, say $C$, is such that
$W_{m+1}\setminus C$ is a star. Hence, a sampling $\xi':\fW\to\fC$,
is not, in general, associated with a sampling $\fW\to\fS$; thus,
we may not have a nesting.
\end{remark}

\section{Preliminaries on graph and matching  theory}
Here we recall some
known results about matchings of bipartite graphs;
for further references,
see \cite{LP,T}.

By graph we shall always mean a finite unordered graph
$\Gamma=(V,E)$ without loops, having vertex set $V$ and edge set
$E$.

Recall that a graph $\Gamma=(V,E)$ is \emph{bipartite} if $V$ can
be partitioned into two sets $\Gamma_1,\Gamma_2$ such that every
edge of $\Gamma$ has one vertex in $\Gamma_1$ and one in
$\Gamma_2$. The \emph{degree} of $v\in V$ in $\Gamma$ is the
number $\deg_{\Gamma}(v)$ of edges of $\Gamma$ containing $v$. A
bipartite graph $\Gamma$ with vertex set $\Gamma_1\cup\Gamma_2$ is
\emph{$(d,e)$--regular} if each vertex in $\Gamma_1$ has degree
$d$ while each vertex in $\Gamma_2$ has degree $e$. If $d=e$,  we
say that $\Gamma$ is \emph{regular of degree
  $d$}.
\par
A \emph{matching} in a graph $\Gamma$ is a set of edges of
$\Gamma$, no two of which are incident. A \emph{perfect matching}
(or \emph{$1$--factor})  of $\Gamma$ is a matching partitioning
the vertex set $V(\Gamma)$. In a bipartite graph $\Gamma$ with
vertex set $\Gamma_1\cup\Gamma_2$, a matching is \emph{full} if it
contains $\min(|\Gamma_1|,|\Gamma_2|)$ edges. Clearly, a full
matching of $\Gamma$ is perfect if, and only if,
$|\Gamma_1|=|\Gamma_2|$.

The following lemma shall be used throughout the paper.
\begin{lemma}[\cite{A}, pag. 397, Corollary 8.13]
\label{cor813}
Any bipartite $(d,e)$--regular graph $\Gamma$ possesses a full
matching.
\end{lemma}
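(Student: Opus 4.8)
The plan is to reduce the statement to Hall's marriage theorem. Write $\Gamma_1\cup\Gamma_2$ for the bipartition of $\Gamma$ and assume, without loss of generality, that $|\Gamma_1|\le|\Gamma_2|$; we may also assume $\Gamma_1,\Gamma_2\ne\emptyset$ and $d,e\ge 1$, since otherwise $\Gamma$ is essentially edgeless and there is nothing to prove. A full matching then has exactly $|\Gamma_1|=\min(|\Gamma_1|,|\Gamma_2|)$ edges, so it suffices to produce a matching of $\Gamma$ saturating every vertex of $\Gamma_1$.

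The first step is a double count of $|E(\Gamma)|$: summing degrees over $\Gamma_1$ and over $\Gamma_2$ gives $d\,|\Gamma_1|=e\,|\Gamma_2|$, and combined with $|\Gamma_1|\le|\Gamma_2|$ this forces $d\ge e$. The second step is to verify Hall's condition for $\Gamma_1$. Fix $S\subseteq\Gamma_1$ and let $N(S)\subseteq\Gamma_2$ be the set of vertices adjacent to some vertex of $S$. Every edge of $\Gamma$ meeting $S$ has its other endpoint in $N(S)$; counted from the $S$--side there are exactly $d\,|S|$ such edges (the vertices of $S$ lie in one class, so they share no edge), while counted from the $N(S)$--side there are at most $e\,|N(S)|$ of them. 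Hence $d\,|S|\le e\,|N(S)|$, so
\[
|N(S)|\ \ge\ \frac{d}{e}\,|S|\ \ge\ |S|.
\]
Thus Hall's condition holds, and Hall's marriage theorem supplies a matching $M$ of $\Gamma$ saturating $\Gamma_1$. Since $|M|=|\Gamma_1|=\min(|\Gamma_1|,|\Gamma_2|)$, the matching $M$ is full, as required.

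I do not expect any genuine obstacle: the entire content is the observation that $(d,e)$--regularity makes Hall's condition automatic through edge counting. The only points needing a little care are the choice of $\Gamma_1$ as the smaller class (so that the inequality comes out as the useful $d\ge e$ rather than $e\ge d$) and the handling of the degenerate conventions on $d,e$ and on empty classes. One could alternatively argue by induction on $|E(\Gamma)|$, removing one suitable edge at a time, or — in the special case $d=e$ — invoke the decomposition of a $d$--regular bipartite graph into $d$ perfect matchings; but the argument via Hall's theorem is the shortest.
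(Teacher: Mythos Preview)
Your argument is correct: the double count $d\,|\Gamma_1|=e\,|\Gamma_2|$ together with $|\Gamma_1|\le|\Gamma_2|$ gives $d\ge e$, and then the edge count $d\,|S|\le e\,|N(S)|$ verifies Hall's condition on the smaller side, so Hall's marriage theorem yields a matching saturating $\Gamma_1$, which is full.

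As for comparison with the paper: there is nothing to compare. The paper does not prove this lemma at all; it is stated as a quotation of a known result (Corollary~8.13 on page~397 of Aigner's \emph{Combinatorial Theory}) and used as a black box in the proofs of Lemma~\ref{tteo}, Lemma~\ref{lr0} and Theorem~\ref{Ref}. Your Hall-based argument is exactly the standard proof one finds in textbooks, so it is entirely appropriate here. The alternative you mention via K\H{o}nig's edge-colouring theorem (Theorem~\ref{klct} in the paper) would also work and is in the same spirit as what the paper already invokes elsewhere, but the Hall route is the most direct.
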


An \emph{edge colouring} of $\Gamma$ is a function
$w:E\to\NN$ such that for any incident edges, say $e_1, e_2$,
\[ w(e_1)\neq w(e_2). \]
An \emph{$n$--edge colouring} of $\Gamma$ is an edge colouring
using exactly $n$ colours. The \emph{chromatic index}
$\chi'(\Gamma)$ is the minimum $n$ such that $\Gamma$ has an
$n$--edge colouring.

\begin{theorem}[K\"onig Line Colouring Theorem, \cite{K1,K2}]
\label{klct}
For any bipartite graph $\Gamma$,
\[ \chi'(\Gamma)=\max_{v\in \Gamma}\,\deg_{\Gamma}(v). \]
\end{theorem}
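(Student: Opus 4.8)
The bound $\chi'(\Gamma)\ge\Delta$, where $\Delta:=\max_{v\in\Gamma}\deg_{\Gamma}(v)$, is immediate: if $v$ realises this maximum, the $\Delta$ edges incident with $v$ are pairwise incident, so every edge colouring must assign them $\Delta$ distinct colours. The substance is the reverse inequality $\chi'(\Gamma)\le\Delta$, and the plan is to deduce it from Lemma \ref{cor813} by reducing to the regular case and decomposing into perfect matchings.

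First I would enlarge $\Gamma$ to a $\Delta$--regular bipartite graph. Writing $\Gamma_1,\Gamma_2$ for the two parts, add isolated vertices to the smaller one until $|\Gamma_1|=|\Gamma_2|=N$; since each vertex then has at most $N$ neighbours on the other side, we still have $\Delta\le N$. Now repeatedly choose a vertex $x\in\Gamma_1$ and a vertex $y\in\Gamma_2$, each of current degree $<\Delta$, with $\{x,y\}$ not already present, and insert the edge $\{x,y\}$. Because the two sides always carry equal total degree, as long as the graph is not yet $\Delta$--regular a deficient vertex exists on each side, so such a pair can be found (if one allows parallel edges the pair may be taken arbitrarily and the step is trivial; Lemma \ref{cor813}, being essentially Hall's theorem, extends to multigraphs without change). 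The procedure increases the edge count and hence terminates in a $\Delta$--regular bipartite graph $\widehat\Gamma\supseteq\Gamma$ with parts of equal size $N$.

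Next I would apply Lemma \ref{cor813} to $\widehat\Gamma$, which is $(\Delta,\Delta)$--regular: it has a full matching $M_1$, and $M_1$ is perfect since $|\Gamma_1|=|\Gamma_2|$. Deleting $M_1$ from $\widehat\Gamma$ leaves a $(\Delta-1)$--regular bipartite graph, and iterating partitions $E(\widehat\Gamma)$ into perfect matchings $M_1,\dots,M_\Delta$. Colouring every edge of $M_i$ with colour $i$ yields a proper $\Delta$--edge colouring of $\widehat\Gamma$, since edges of a common colour form a matching and so are non-incident; restricting this colouring to the subgraph $\Gamma$ gives a proper edge colouring of $\Gamma$ using at most $\Delta$ colours, whence $\chi'(\Gamma)\le\Delta$.

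The only point that is not completely routine is the completion of $\Gamma$ to a $\Delta$--regular bipartite graph: in the strictly simple--graph setting one must verify that a non-adjacent pair of under-full vertices always exists — equivalently, that the required deficiency degree sequence is realisable inside the bipartite complement — while permitting parallel edges removes this friction at the cost of a harmless extension of Lemma \ref{cor813}. A self-contained alternative avoids the completion altogether by induction on $|E(\Gamma)|$: delete an edge $\{x,y\}$, colour the rest with $\Delta$ colours, pick a colour $\alpha$ missing at $x$ and a colour $\beta$ missing at $y$, and if $\alpha\neq\beta$ interchange $\alpha$ and $\beta$ along the maximal $\alpha/\beta$--alternating path issuing from $y$; bipartiteness forces this path to miss $x$, so afterwards a single colour is free at both ends of $\{x,y\}$ and can be used on it. I would make the matching argument the main proof, as it reuses the machinery already in place, and mention the Kempe--chain variant only as a remark.
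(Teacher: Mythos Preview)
The paper does not prove this theorem at all: it states the result, attributes it to K\H{o}nig, and refers the reader to \cite[Theorem 1.4.18]{LP} for a proof. There is therefore no in-paper argument against which to compare your attempt.

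That said, your proof is correct and is one of the two standard routes. The regularisation-plus-matching-decomposition argument is exactly the approach that fits the paper's toolkit, since it recycles Lemma~\ref{cor813}; your caveat about the simple-graph completion step is accurate, and passing to multigraphs (where Lemma~\ref{cor813} holds unchanged) is the clean fix. The Kempe-chain alternative you sketch is also correct and is in fact closer to what one finds in \cite{LP}. Either would be acceptable here; given that the paper only needs the statement, citing the reference as the authors do is entirely appropriate.
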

For a proof in English of this result and more references on the topics, see
\cite[Theorem 1.4.18]{LP} and the discussion therein.

\section{Embeddings and samplings of
  $(K_n,\Gamma)$--complete designs}
\label{s3} Throughout this section, let $\Gamma'\leq\Gamma$ be two
subgraphs of $K_n$.
\par
By $\,^{\lambda}K_n(\Gamma)$ we will denote the multiset obtained
from $K_n(\Gamma)$ by repeating each
  of its elements exactly $\lambda$ times.

\begin{lemma}
  \label{tteo}
  Let $b_1$ be the number of blocks of
  $K_n(\Gamma)$ and $b_2$ be that of
  $K_n(\Gamma')$ and let $m=\lcm(b_1,b_2)$. Then
  there is a bijective embedding
 \[ \psi:\,^{(m/b_2)}K_n(\Gamma') \to \,^{(m/b_1)}K_n(\Gamma). \]
\end{lemma}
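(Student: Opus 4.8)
The plan is to set up a bipartite graph $\Delta$ whose two vertex classes are the blocks of the two multisets $\,^{(m/b_2)}K_n(\Gamma')$ and $\,^{(m/b_1)}K_n(\Gamma)$, joining $G\in K_n(\Gamma')$ to $B\in K_n(\Gamma)$ precisely when $G\leq B$. (Formally, one works with labelled copies so that repeated elements become distinct vertices.) A bijective embedding $\psi$ is then exactly a perfect matching of $\Delta$. Since $|\,^{(m/b_2)}K_n(\Gamma')|=m=|\,^{(m/b_1)}K_n(\Gamma)|$ by the choice of $m=\lcm(b_1,b_2)$, the two classes have equal size, so a full matching is automatically perfect; by Lemma \ref{cor813} it suffices to show that $\Delta$ is $(d,e)$--regular for suitable $d,e$.

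The key step is therefore the degree count. First I would show $\Delta$ is regular on each side \emph{before} taking multiplicities. On the $K_n(\Gamma)$ side: the number of subgraphs of a fixed copy of $\Gamma$ that are isomorphic to $\Gamma'$ is a constant $c_1$ depending only on $\Gamma,\Gamma'$ (it is the same for every block, since all blocks are isomorphic to $\Gamma$ and the count is invariant under isomorphism). On the $K_n(\Gamma')$ side: the number of copies of $\Gamma$ in $K_n$ that contain a fixed copy of $\Gamma'$ is a constant $c_2$, by a transitivity argument — $\mathrm{Aut}(K_n)=S_n$ acts transitively on the copies of $\Gamma'$ in $K_n$, and this action permutes the set of copies of $\Gamma$ containing a given copy of $\Gamma'$ onto the analogous set for any other copy, so the cardinality is independent of the chosen $\Gamma'$-copy. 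Hence the ``plain'' incidence bipartite graph between $K_n(\Gamma')$ and $K_n(\Gamma)$ is $(c_2,c_1)$--regular. Double counting edges then gives $b_2 c_2 = b_1 c_1$. After replacing each vertex by its multiplicity, every vertex of the $\Gamma'$-class gets degree $d=(m/b_1)\,c_2$ and every vertex of the $\Gamma$-class gets degree $e=(m/b_2)\,c_1$; one checks $d,e>0$, so $\Delta$ is $(d,e)$--regular and Lemma \ref{cor813} applies.

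The main obstacle — more bookkeeping than genuine difficulty — is handling the multiplicities cleanly: one must be careful that ``bijective embedding'' between multisets means a bijection on the underlying indexed sets that respects the $\leq$ relation, and that blowing up a $(c_2,c_1)$--regular bipartite graph by constant vertex multiplicities $m/b_1$ and $m/b_2$ on the two sides indeed yields a regular bipartite graph (each edge of the original becomes a complete bipartite ``box'' of the appropriate dimensions). Once the regularity of the blown-up graph $\Delta$ is established, the full matching supplied by Lemma \ref{cor813} is the desired $\psi$, and its bijectivity is immediate from the equality of the two side-cardinalities. A minor degenerate point to dispatch: if $\Gamma'\cong\Gamma$ then $b_1=b_2$, $m=b_1$, and $\psi$ is just a perfect matching of the (then $(1,1)$--regular after noting $c_1=c_2=1$, or more honestly $c_1 = c_2$ and $b_1 = b_2$) incidence graph; nothing special is needed.
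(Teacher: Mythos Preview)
Your proposal is correct and follows essentially the same route as the paper: build the bipartite incidence graph between $K_n(\Gamma)$ and $K_n(\Gamma')$, show it is $(d,e)$--regular via the transitive action of $\Aut(K_n)\simeq S_n$, blow it up by the multiplicities $m/b_1$ and $m/b_2$ to a biregular bipartite graph on two equal-sized parts, and invoke Lemma~\ref{cor813} to obtain a perfect matching, which is the desired bijective embedding. The only cosmetic differences are that the paper names the plain graph $\Delta$ and the blown-up one $\Delta'$ (you do the reverse), and that the paper argues regularity on both sides by contradiction from transitivity, whereas you note---slightly more directly---that the count $c_1$ of $\Gamma'$-subgraphs in a fixed block is an isomorphism invariant of $\Gamma$.
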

\begin{proof}
  Introduce the bipartite graph $\Delta$ with vertex set
  $V=K_n(\Gamma)\cup K_n(\Gamma')$ and $x,y\in V$ are adjacent
  if, and only if, $x\neq y$ and either
  $x\leq y$ or $y\leq x$.

  In the first step of the proof, we verify that
  $\Delta$ is $(d,e)$--regular, for some $d,e\in\NN$.
  As the automorphism group of $K_n$ is $\Aut(K_n)\simeq S_n$,
  we have that $\Aut(K_n)$ is transitive on both
  $K_n(\Gamma)$ and $K_n(\Gamma')$.
  We now argue by way of contradiction.
  Suppose that there are $\Gamma_1,\Gamma_2\in K_n(\Gamma)$
  such that
  \[ d_1=\deg \Gamma_1<\deg \Gamma_2=d_2. \]
  Then, there exists $\sigma\in\Aut(K_n)$ such that
  $\sigma(\Gamma_2)=\Gamma_1$.
  In particular, the image under $\sigma$ of the $d_2$ subgraphs
  of $\Gamma_2$ isomorphic to $\Gamma'$ consists of $d_2$ subgraphs
  of $\Gamma_1$, all isomorphic to $\Gamma'$. However, we supposed
  the number of subgraphs of $\Gamma_1$ isomorphic to $\Gamma'$ to be
  $d_1<d_2$; this yields a contradiction.

  Likewise, suppose we have two graphs $\Gamma_1', \Gamma_2'\in K_n(\Gamma')$
  with
  \[ e_1=\deg \Gamma_1'<\deg \Gamma_2'=e_2. \]
  As $\Aut(K_n)$ is transitive on $K_n(\Gamma')$, there is
  $\sigma\in\Aut(K_n)$ with $\sigma(\Gamma_2')=\Gamma_1'$.
  This permutation $\sigma$, in particular, sends the
  $e_2$ graphs isomorphic to $\Gamma$ containing
  $\Gamma_2'$ into $e_2$ distinct graphs containing $\Gamma_1'$
  isomorphic to $\Gamma$. This yields a contradiction,
  since $e_1<e_2$.

  Let now $\Delta'$ be the graph obtained from $\Delta$ by
  replicating $(m/b_1)$--times  $K_n(\Gamma)$ and
  $(m/b_2)$--times $K_n(\Gamma')$.
  By construction, $\Delta'$ is a
  $(dm/b_2,em/b_1)$--regular bipartite graph.
  By Lemma \ref{cor813}, $\Delta'$ admits a full matching $M$.
  Furthermore, since both parts of $\Delta'$ have the same
  cardinality, $M$ is perfect.
  \par
  For any $x\in K_n(\Gamma')$, define
  $\psi(x)=y$ where $(x,y)\in M$.
  This provides an embedding, as required.
\end{proof}
\begin{remark}
\label{remE}
Since $\psi$ in Lemma \ref{tteo} is bijective, the function
\[  \xi=\psi^{-1}:\,^{(m/b_1)}K_n(\Gamma)\to \,^{(m/b_2)}K_n(\Gamma') \]
is a sampling.
\end{remark}

\noindent Now we are ready to prove our main result.
\begin{theorem}
  \label{mainTG}
  The  complete design $K_n(\Gamma)$
  admits a  regular $\Gamma'$--sampling if, and
  only if, there is $\lambda\in\NN$ such that
  \begin{equation}
    \label{rcond}
    |K_n(\Gamma)|=\lambda|K_n(\Gamma')|.
  \end{equation}
  The redundancy of any such sampling is $\lambda$.
  \par
  Conversely, there is a $\lambda$--fold
  regular embedding of $K_n(\Gamma')$
  in $K_n(\Gamma)$ if, and only if
  \[ |K_n(\Gamma')|=\lambda|K_n(\Gamma)|. \]
\end{theorem}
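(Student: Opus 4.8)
The plan is to reduce both halves of the theorem to the existence of a perfect matching in a suitable regular bipartite graph, exploiting the machinery already assembled in Lemma~\ref{tteo}. First consider the forward direction for samplings. The necessity of condition~\eqref{rcond} is immediate: if $\xi:K_n(\Gamma)\to K_n(\Gamma')$ is a regular sampling of redundancy $\lambda$, then the fibres $\xi^{-1}(G)$, $G\in K_n(\Gamma')$, partition $K_n(\Gamma)$ into $|K_n(\Gamma')|$ classes each of size $\lambda$, whence $|K_n(\Gamma)|=\lambda|K_n(\Gamma')|$; the same count shows that $\lambda$ is forced, proving the ``redundancy'' clause. For sufficiency, suppose $b_1=|K_n(\Gamma)|=\lambda b_2$ with $b_2=|K_n(\Gamma')|$. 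Then $m=\lcm(b_1,b_2)=b_1$, so $m/b_1=1$ and $m/b_2=\lambda$, and Lemma~\ref{tteo} (together with Remark~\ref{remE}) hands us a bijection $\xi=\psi^{-1}:K_n(\Gamma)\to {}^{\lambda}K_n(\Gamma')$ with $\xi(B)\leq B$ for every block $B$. Composing with the $\lambda$-to-$1$ projection ${}^{\lambda}K_n(\Gamma')\to K_n(\Gamma')$ that identifies the $\lambda$ copies of each block gives a surjective map $K_n(\Gamma)\to K_n(\Gamma')$ under which each $G\in K_n(\Gamma')$ has exactly $\lambda$ preimages and $\xi(B)\leq B$ is preserved; this is the desired regular $\Gamma'$--sampling.

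The embedding half is handled symmetrically. For necessity, a $\lambda$--fold regular embedding $\psi:K_n(\Gamma')\to K_n(\Gamma)$ partitions $K_n(\Gamma')$ into fibres $\psi^{-1}(G)$, $G\in K_n(\Gamma)$, each of size $\lambda$, so $|K_n(\Gamma')|=\lambda|K_n(\Gamma)|$. For sufficiency, assume $b_2=\lambda b_1$; now $m=\lcm(b_1,b_2)=b_2$, so $m/b_2=1$ and $m/b_1=\lambda$, and Lemma~\ref{tteo} yields a bijective embedding $\psi:K_n(\Gamma')\to {}^{\lambda}K_n(\Gamma)$. Post-composing with the projection ${}^{\lambda}K_n(\Gamma)\to K_n(\Gamma)$ collapsing the $\lambda$ duplicates of each block produces an embedding $K_n(\Gamma')\to K_n(\Gamma)$ in which every $G\in K_n(\Gamma)$ receives exactly $\lambda$ preimages, i.e.\ a $\lambda$--fold regular embedding.

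The genuinely substantive content has already been discharged inside Lemma~\ref{tteo}: namely the verification that the incidence graph $\Delta$ between $K_n(\Gamma)$ and $K_n(\Gamma')$ is biregular (this is where the transitivity of $\Aut(K_n)\cong S_n$ on each of $K_n(\Gamma)$ and $K_n(\Gamma')$ is essential) and the appeal to Lemma~\ref{cor813} for a full, hence perfect, matching. Consequently the only remaining point to be careful about in the present proof is bookkeeping: checking that replacing ``$\leq$'' by an honest choice of subgraph on the nose is compatible with the duplication/collapse of blocks, and that the matching produced by Lemma~\ref{tteo} indeed sends a copy of $G$ to a block actually containing $G$ (which it does, since adjacency in $\Delta$ was defined by containment). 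I expect this identification step---translating between the multiset language ${}^{\lambda}K_n(\cdot)$ and the fibre-counting language of regular samplings/embeddings---to be the only place where a reader might want extra detail; everything else is a direct quotation of the lemma plus an elementary counting argument for the necessity directions.
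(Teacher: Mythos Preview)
Your proof is correct and follows essentially the same approach as the paper: necessity by the obvious fibre-counting, and sufficiency by specializing Lemma~\ref{tteo} (via Remark~\ref{remE}) to the case where one of $m/b_1$, $m/b_2$ equals $1$, then collapsing the multiset $^{\lambda}K_n(\cdot)$ onto $K_n(\cdot)$. Your version spells out the $\lcm$ computation and the necessity argument in slightly more detail than the paper does, but the underlying strategy is identical.
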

\begin{proof}
 Clearly, Condition \eqref{rcond} is necessary for the existence
 of a regular $\Gamma'$--sampling.
 By Remark \ref{remE}, there is a bijective sampling
 \[ \vartheta: K_n(\Gamma)\to\,^{\lambda}K_n(\Gamma'). \]
 Each $y\in K_n(\Gamma')$ appears exactly $\lambda$ times
 in $\,^{\lambda}K_n(\Gamma')$. As such, $y$ is the image
 of $\lambda$ elements of $K_n(\Gamma)$ under $\vartheta$.
 Thus, $\vartheta$ induces a regular $\Gamma'$--sampling
 $\xi:K_n(\Gamma)\to K_n(\Gamma')$ with redundancy $\lambda$.
 \par
 The second part of the theorem is proved in an analogous way,
 using the bijective embedding
 \[ \psi: K_n(\Gamma')\to\,^{\lambda}K_n(\Gamma) \]
 provided by Lemma \ref{tteo}.
\end{proof}
We have to point out that the proof of previous theorem is not
constructive. In order to determine actual samplings we may, in
general, need to use some of the algorithms for matchings in graphs,
like the ones in \cite{LP}, applied to the graph $\Delta'$ of
Lemma \ref{tteo}. Also, in Section \ref{s4}, we will construct
explicitly regular samplings for some complete designs.

When $|K_n(\Gamma')|$ is not a divisor of
$|K_n(\Gamma)|$,
the previous theorem
fails.
Under the assumption
\[ |K_n(\Gamma)|=\lambda |K_n(\Gamma')|+r, \]
with $0<r<|K_n(\Gamma')|$,
we may investigate
the existence of
sampling functions which are ``as regular as
possible'', namely in which the number of
preimages of any given element is either $\lambda$
or $\lambda+1$.
These samplings shall be called
\emph{ $(\lambda,\lambda+1)$--semiregular.}

We start with the following lemma.
\begin{lemma}
\label{lr0}
Suppose
\[ \lambda=\left\lfloor\frac{|K_n(\Gamma)|}{|K_n(\Gamma')|}\right\rfloor; \]
then,
there is a strict embedding
$\xi:\,^{\lambda}K_n(\Gamma')\to K_n(\Gamma)$.
\end{lemma}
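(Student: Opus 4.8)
The plan is to build, exactly as in the proof of Lemma \ref{tteo}, the bipartite incidence graph $\Delta$ on $K_n(\Gamma)\cup K_n(\Gamma')$ where $x\sim y$ iff $x\neq y$ and one of $x,y$ is a subgraph of the other. The key input already established there is that $\Delta$ is $(d,e)$--regular for some $d,e\in\NN$, the argument being a transitivity argument for $\Aut(K_n)\simeq S_n$ on both $K_n(\Gamma)$ and $K_n(\Gamma')$. Now I replace $K_n(\Gamma')$ by its $\lambda$--fold copy $\,^{\lambda}K_n(\Gamma')$, obtaining a bipartite graph $\Delta_\lambda$ which is $(d,\lambda e)$--regular: every copy of a block of $K_n(\Gamma')$ still has degree $e$, while every block of $K_n(\Gamma)$ now sees each of its $d$ subgraphs isomorphic to $\Gamma'$ in $\lambda$ distinct copies, hence has degree $\lambda d$; so $\Delta_\lambda$ is $(\lambda d,e)$--regular with $\Gamma_1=K_n(\Gamma)$ the side of degree $\lambda d$. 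Let me recompute: a vertex on the $K_n(\Gamma)$ side has degree $\lambda d$, a vertex on the $\,^{\lambda}K_n(\Gamma')$ side has degree $e$, so $\Delta_\lambda$ is $(\lambda d, e)$--regular.

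Next I invoke Lemma \ref{cor813}: any bipartite $(d,e)$--regular graph has a full matching, i.e.\ one saturating the smaller side. Here $|\,^{\lambda}K_n(\Gamma')| = \lambda|K_n(\Gamma')| \le |K_n(\Gamma)|$ by the definition of $\lambda = \lfloor |K_n(\Gamma)|/|K_n(\Gamma')|\rfloor$, so the $\,^{\lambda}K_n(\Gamma')$ side is the smaller one and a full matching $M$ saturates every vertex of $\,^{\lambda}K_n(\Gamma')$. For each $x\in\,^{\lambda}K_n(\Gamma')$ let $\xi(x)$ be the unique $y\in K_n(\Gamma)$ with $(x,y)\in M$. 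Since $M$ is a matching, distinct $x$ (including distinct copies of the same block) go to distinct $y$, so $\xi$ is injective; and $\{x,\xi(x)\}$ being an edge of $\Delta_\lambda$ means $x\le \xi(x)$ (it cannot be $\xi(x)\le x$ with $\xi(x)\neq x$ since $\Gamma'\leq\Gamma$ rules out a proper copy of $\Gamma$ inside a copy of $\Gamma'$ unless they are equal, in which case $x\le\xi(x)$ still holds). Hence $\xi$ is a strict embedding $\,^{\lambda}K_n(\Gamma')\to K_n(\Gamma)$, as claimed.

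The only genuinely delicate point is checking that $\Delta_\lambda$ is honestly $(d,e)$--regular in the multiset sense — that replicating a vertex $\lambda$ times multiplies its degree by $\lambda$ and leaves the degrees on the opposite side unchanged — but this is a routine bookkeeping observation about multigraphs rather than a real obstacle, and it is already implicitly used in Lemma \ref{tteo} (via the passage to $\Delta'$). A secondary subtlety worth a line is the direction of the subgraph relation in the matched pairs: one must note that $\Gamma'$ is a proper (or at most equal) subgraph of $\Gamma$ so every edge of $\Delta$ incident to a $K_n(\Gamma')$--vertex $x$ and a $K_n(\Gamma)$--vertex $y$ forces $x\le y$, which is exactly what makes the matching an embedding rather than merely a bijection of index sets. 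Beyond that the proof is a direct transcription of the Lemma \ref{tteo}/Remark \ref{remE} machinery, so I expect it to be short.
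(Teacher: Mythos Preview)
Your argument is correct and follows exactly the paper's own proof: build the $(d,e)$--regular bipartite graph $\Delta$ as in Lemma~\ref{tteo}, replicate $K_n(\Gamma')$ $\lambda$ times to obtain a $(\lambda d,e)$--regular bipartite graph, and apply Lemma~\ref{cor813} to extract a full matching saturating the $\,^{\lambda}K_n(\Gamma')$ side. You are in fact slightly more explicit than the paper in verifying that $\lambda|K_n(\Gamma')|\le|K_n(\Gamma)|$ (so the full matching really does saturate the correct side) and in checking the subgraph direction $x\le\xi(x)$; the only cosmetic issue is the stream-of-consciousness ``$(d,\lambda e)$\ldots let me recompute\ldots $(\lambda d,e)$'' passage, which should be cleaned up in a final version.
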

\begin{proof}
  Argue as in the first part of the proof of Lemma \ref{tteo}
  and introduce the $(d,e)$--regular bipartite graph $\Delta$.
  Let now $\Delta'$ be the graph obtained from $\Delta$ by
  replicating
  $\lambda$--times $K_n(\Gamma')$. As $\Delta'$ is a
  $(\lambda d,e)$--regular bipartite graph, it admits
  by Lemma \ref{cor813}
  a full matching $M$ of size $\lambda|K_n(\Gamma')|$;
  in particular, each vertex in $\,^{\lambda}K_{n}(\Gamma')$
  is matched to exactly one vertex of $K_n(\Gamma)$.
\end{proof}
By collapsing the multiset $\,^{\lambda}K_n(\Gamma')$ in the proof of
the previous lemma, we have that $M$ associates
$\lambda$ elements of $K_n(\Gamma)$ to each element
of $K_n(\Gamma')$. This leads to the following
corollary.
\begin{corollary}
\label{lr1}
Suppose
\[ \lambda=\left\lfloor\frac{|K_n(\Gamma)|}{|K_n(\Gamma')|}\right\rfloor; \]
then,
there exists a sampling $\xi:K_n(\Gamma)\to K_n(\Gamma')$
such that any $g\in K_n(\Gamma')$ has \emph{at least}
$\lambda$ preimages.
\end{corollary}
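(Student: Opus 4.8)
The plan is to read the statement off directly from Lemma~\ref{lr0} by ``collapsing'' the $\lambda$ layers of the multiset $\,^{\lambda}K_n(\Gamma')$ and then extending the resulting partial map to all of $K_n(\Gamma)$ in an arbitrary but legitimate way. First I would apply Lemma~\ref{lr0} to fix a strict embedding
\[ \vartheta:\,^{\lambda}K_n(\Gamma')\to K_n(\Gamma), \]
which, being strict, is injective. I regard $\,^{\lambda}K_n(\Gamma')$ as $\lambda$ pairwise disjoint copies of $K_n(\Gamma')$ and write $\pi:\,^{\lambda}K_n(\Gamma')\to K_n(\Gamma')$ for the map sending each copy of a graph to the graph itself; let $I=\vartheta\bigl(\,^{\lambda}K_n(\Gamma')\bigr)\subseteq K_n(\Gamma)$, so that $|I|=\lambda|K_n(\Gamma')|$ by injectivity.

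Next I would define $\xi:K_n(\Gamma)\to K_n(\Gamma')$ as follows. For $G\in I$, injectivity of $\vartheta$ yields a unique $H\in\,^{\lambda}K_n(\Gamma')$ with $\vartheta(H)=G$; set $\xi(G)=\pi(H)$. Since $\vartheta$ is an embedding, $H\leq\vartheta(H)=G$, whence $\xi(G)=\pi(H)\leq G$. For $G\in K_n(\Gamma)\setminus I$ I would choose $\xi(G)$ to be \emph{any} subgraph of $G$ isomorphic to $\Gamma'$; such a subgraph exists because $\Gamma'\leq\Gamma\cong G$, so an isomorphism $\Gamma\to G$ transports a fixed copy of $\Gamma'$ sitting inside $\Gamma$ to a suitable subgraph of $G$. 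In either case $\xi(G)\leq G$.

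Finally I would verify that $\xi$ is a sampling with the required property. Fix $g\in K_n(\Gamma')$; it has exactly $\lambda$ preimages $H_1,\dots,H_\lambda$ under $\pi$, and the blocks $\vartheta(H_1),\dots,\vartheta(H_\lambda)\in I$ are pairwise distinct and all satisfy $\xi(\vartheta(H_j))=\pi(H_j)=g$. Hence every $g\in K_n(\Gamma')$ has at least $\lambda$ preimages under $\xi$; in particular $\xi$ is surjective, and since $\xi(G)\leq G$ for all $G$, it is a sampling. I do not expect any real obstacle here: the mathematical content is entirely contained in Lemma~\ref{lr0}, and the corollary is merely the bookkeeping step that collapses multiplicities. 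The only point worth emphasising is that the freedom in defining $\xi$ on the $|K_n(\Gamma)|-\lambda|K_n(\Gamma')|$ leftover blocks of $K_n(\Gamma)\setminus I$ is precisely what obstructs regularity, and it is this slack that the semiregular constructions to follow are designed to control.
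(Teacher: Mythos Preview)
Your proposal is correct and follows exactly the approach the paper sketches: it too derives the corollary from Lemma~\ref{lr0} by ``collapsing the multiset $\,^{\lambda}K_n(\Gamma')$'' so that the full matching $M$ assigns $\lambda$ blocks of $K_n(\Gamma)$ to each element of $K_n(\Gamma')$. You are simply more explicit than the paper about extending $\xi$ arbitrarily on the unmatched blocks $K_n(\Gamma)\setminus I$, a detail the paper leaves implicit.
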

It can be seen directly from Corollary \ref{lr1} that when
\[ |K_n(\Gamma)|=\lambda|K_n(\Gamma')|+1, \]
there always exists a $(\lambda,\lambda+1)$--semiregular sampling.
In general, however, further hypotheses on the nature of the
embedding of $K_n(\Gamma')$ into $K_n(\Gamma)$ are required. We
prove a result for the case $\lambda=1$.

\begin{theorem}
\label{Ref} Let $\Gamma'\leq\Gamma\leq K_n$ with
\begin{equation}
\label{r}
|K_n(\Gamma)|-|K_n(\Gamma')|=r<|K_n(\Gamma')|.
\end{equation}
Suppose
\begin{equation}
\label{kgp}
 |K_n(\Gamma)|>\frac{er^2}{e+r-1},
\end{equation}
where $e\in\NN$ is the number of elements of $K_n(\Gamma)$
containing $\Gamma'$. Then, there is a $(1,2)$--semiregular
sampling $\xi:K_n(\Gamma)\to K_n(\Gamma')$.
\end{theorem}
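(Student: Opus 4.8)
The plan is to set up, as in Lemma~\ref{tteo} and Lemma~\ref{lr0}, the bipartite incidence graph $\Delta$ on $V=K_n(\Gamma)\cup K_n(\Gamma')$ whose edges join a copy of $\Gamma'$ to each copy of $\Gamma$ containing it; recall from the proof of Lemma~\ref{tteo} that $\Delta$ is $(d,e)$--regular, where $d$ is the number of subgraphs of $\Gamma$ isomorphic to $\Gamma'$ and $e$ is the number of supergraphs of $\Gamma'$ isomorphic to $\Gamma$ inside $K_n$. Since $|K_n(\Gamma)|\cdot d=|K_n(\Gamma')|\cdot e$ (count edges of $\Delta$ two ways) and $|K_n(\Gamma)|>|K_n(\Gamma')|$, we have $d<e$. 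A $(1,2)$--semiregular sampling is exactly a function $\xi\colon K_n(\Gamma)\to K_n(\Gamma')$ along edges of $\Delta$ such that each vertex of $K_n(\Gamma')$ receives either one or two preimages; equivalently, a subgraph $H\subseteq\Delta$ in which every vertex of $K_n(\Gamma)$ has degree exactly $1$ and every vertex of $K_n(\Gamma')$ has degree $1$ or $2$.

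First I would produce, via Lemma~\ref{lr0} with $\lambda=1$, a strict embedding, i.e.\ a matching $M$ in $\Delta$ saturating all of $K_n(\Gamma')$; write $f\colon K_n(\Gamma')\hookrightarrow K_n(\Gamma)$ for the corresponding injection. Let $U=K_n(\Gamma)\setminus f(K_n(\Gamma'))$ be the set of $r$ ``unmatched'' copies of $\Gamma$. Defining $\xi$ on $f(K_n(\Gamma')) $ by $\xi(f(G))=G$ already gives every element of $K_n(\Gamma')$ one preimage; it remains to assign to each of the $r$ elements of $U$ some neighbour in $K_n(\Gamma')$, with no element of $K_n(\Gamma')$ chosen twice. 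This is possible precisely when the bipartite graph $\Delta[U\,\cup\,K_n(\Gamma')]$ — each vertex of $U$ still having degree $d$ — admits a matching saturating $U$. By Hall's theorem this holds unless some $S\subseteq U$ has $|N(S)|<|S|\le r$; but $N(S)$ contains, for each vertex of $S$, its $d$ neighbours, and each vertex of $K_n(\Gamma')$ has degree $e$ in $\Delta$, so a standard double count gives $|N(S)|\ge d|S|/e$. The point where hypothesis~\eqref{kgp} enters is in upgrading this crude bound: one shows that if $|N(S)|$ were small then the $e$ copies of $\Gamma$ above each $\Gamma'\in N(S)$ would be forced to overlap $U$ heavily, and comparing this with $|K_n(\Gamma)|=|K_n(\Gamma')|+r$ together with $|K_n(\Gamma)|>er^2/(e+r-1)$ forces $|N(S)|\ge|S|$, so Hall's condition holds and the matching on $U$ exists.

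The main obstacle is the Hall-condition verification for the ``leftover'' set $U$: the naive degree bound $|N(S)|\ge d|S|/e$ is not enough when $d$ is much smaller than $e$, so one needs the sharper estimate that uses~\eqref{kgp}. Concretely, I would argue by contradiction: suppose $S\subseteq U$ with $t=|S|\le r$ and $|N(S)|=s<t$. Every one of the $t$ graphs in $S$ lies below $d$ graphs of $N(S)$, and each of the $s$ graphs in $N(S)$ lies below $e$ copies of $\Gamma$ altogether, of which at most $|U|=r$ lie in $U$; counting incidences between $S$ and $N(S)$ gives $td\le s e$ on one side, while on the other, the $se$ copies of $\Gamma$ above $N(S)$ are all distinct and include the $\ge t$ copies forming $S$ and must fit inside $K_n(\Gamma)$, yet they cannot all be outside $U$ because $s<t\le r$ forces too many of them into the small set $U$; quantifying ``too many'' with $s\le r-1$ and $|K_n(\Gamma)|>er^2/(e+r-1)$ produces the contradiction $se<t\le se$. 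Once Hall's condition is secured, the matching on $U$ extends $\xi$ to all of $K_n(\Gamma)$, each element of $K_n(\Gamma')$ now has one preimage from $M$ and at most one extra from $U$, so $\xi$ is $(1,2)$--semiregular, completing the proof.
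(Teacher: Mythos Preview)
Your overall plan matches the paper's: build the $(d,e)$--regular bipartite graph $\Delta$, take a full matching $M$ saturating $K_n(\Gamma')$, and then find a second matching saturating the $r$ leftover vertices $U\subseteq K_n(\Gamma)$. Where you diverge is in the second step: the paper does \emph{not} invoke Hall's theorem. Instead it observes that in the induced bipartite graph $\Delta'$ on $U\cup K_n(\Gamma')$ every vertex of $K_n(\Gamma')$ has degree strictly less than $e$ (one of its $e$ neighbours is its $M$--partner, hence not in $U$), so by K\"onig's line--colouring theorem $\chi'(\Delta')\le e-1$. Each vertex of $U$ still has degree $d$, hence sees $d$ distinct colours; the hypothesis \eqref{kgp} is unwound to the inequality $rd>(r-1)(e-1)$, and a pigeonhole count on colour--vertex incidences then forces some colour class to touch every vertex of $U$, giving the required matching.

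Your Hall--based route can be made to work, but as written it has a genuine gap. The crucial point you miss is the drop from $e$ to $e-1$: after the first matching, each $v\in K_n(\Gamma')$ has at most $e-1$ neighbours in $U$, so the correct incidence bound is $td\le s(e-1)$, not $td\le se$. With this in hand the verification is short: \eqref{kgp} gives $d>(r-1)(e-1)/r$, hence $s\ge td/(e-1)>t(r-1)/r=t-t/r\ge t-1$ (using $t\le r$), so $s\ge t$ and Hall's condition holds. Your paragraph attempting the contradiction is instead hand--waving: the phrase ``the $se$ copies of $\Gamma$ above $N(S)$ are all distinct'' is false (a copy of $\Gamma$ can contain several members of $N(S)$), the ``lies below'' directions are inverted, and the displayed contradiction ``$se<t\le se$'' is not what any of your counts produce. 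Replace that paragraph with the two--line computation above and the argument goes through; alternatively, follow the paper and use K\"onig plus pigeonhole, which packages the same inequality $rd>(r-1)(e-1)$ without appealing to Hall.
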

\begin{proof}
Argue as in the proof of Lemma \ref{tteo}, and
construct a $(d,e)$--regular bipartite graph
$\Delta$.
As
\begin{equation}
\label{deK}
 d|K_n(\Gamma)|=e|K_n(\Gamma')|,
\end{equation}
by \eqref{r} we have
\[ (e-d)|K_n(\Gamma')|=dr>0. \]
Thus, $e-1\geq d$. Also, by \eqref{r} and \eqref{deK}, we have
$er=(e-d)|K_n(\Gamma)|$, hence using \eqref{kgp} it results
\[ d>\frac{r-1}{r}(e-1). \]
Determine now, using Lemma \ref{cor813}, a full matching of
$\Delta$; this provides values for the function $\xi$ on a subset
$T$ of $K_n(\Gamma)$ with $T=|K_n(\Gamma')|$. Let now
$R=K_n(\Gamma)\setminus T$ be the set of the vertices of
$K_n(\Gamma)$ which have not been already matched, and consider
the bipartite graph $\Delta'=(V',E')$ obtained from $\Delta$ by
keeping just the vertices in $R\cup K_n(\Gamma')$. This graph, in
general, is not regular; however, $\deg_{\Delta'}(v)=d$ if $v\in
R$ and $\deg_{\Delta'}(v)<e$ if $v\in K_n(\Gamma')$. By Theorem
\ref{klct}, $\chi'(\Delta')\leq e-1$. Let $w:E'\to L\subset \NN$
be a colouring of $\Delta'$ with $|L|\leq e-1$. Since
$R=\Delta'\cap K_n(\Gamma)$ contains exactly $r$ vertices and each
of these is incident with at least $(e-1)(r-1)/r+1$ differently
coloured edges, there is at least an $\ell\in L$ such that each
vertex of $R$ is incident with an edge of colour $\ell$. Thus, the
set
\[ C_{\ell}=\{ x\in E': w(x)=\ell \} \]
is a full matching for $\Delta'$.
Now, for any $r\in R$,  define $\xi(r)=s$,
where $(r,s)\in C_{\ell}$.
This completes the proof.
\end{proof}

\begin{example}
  An interesting case for Theorem \ref{Ref} occurs
  when $d$ is taken to be as large as possible, namely
  $d=e-1$. Since, in this case, by \eqref{r} and \eqref{deK}
  \[ r=\frac{1}{e}|K_n(\Gamma)|, \]
 Condition \eqref{kgp}
  is always satisfied.
  We provide now an actual example where this
  happens.
  Suppose $n$ to be even and let $\Gamma=K_{n/2}$
  and $\Gamma'=K_{n/2-1}$.
  Each element of $K_n(\Gamma')$ is contained
  in $e=n/2+1$ elements of $K_n(\Gamma)$, while
  $\Gamma$ contains $d=n/2$ elements
  of $K_n(\Gamma')$.
\end{example}

\section{Examples and applications}
\label{s4} As said above, in this section we will show direct
constructions of samplings for some complete designs. A convenient
approach is to consider a suitable automorphism group of the
designs which is also compatible with the sampling we wish to
find, in the sense of the following definition.
\begin{definition}
  Let $\fB$ be a $(K_n,\Gamma)$--design and let $\fB'$ be a
  $(K_n,\Gamma')$-design, with $\Gamma'\leq \Gamma$,
  and suppose $\xi:\fB\to\fB'$ to be a sampling.
  An \emph{automorphism} $\alpha$ of $\xi$
  is an automorphism of $\fB'$ and $\fB$ such that for any
  $B\in\fB$,
  \[ \xi(\alpha(B))=\alpha(\xi(B)). \]
\end{definition}
Observe that an analogous definition is possible also for embeddings;
see, for instance \cite[Theorem 3.2]{B}
where a $2-(p,3,1)$ design is cyclically embedded into
a cyclic $2-(4p,4,1)$ design.

\noindent Let $n$ be an integer. From now on,
we shall mean by
$\ZZ_n^{\hbox{\tiny$\square$}}$  the group of all the
invertible elements in $\ZZ_n$ which are squares.

\begin{example}
\label{ex1}
  Fix $n$. Let $\Gamma=C_k\leq K_n$ be a cycle on $k$ vertices, and
  write $\Gamma'=P_h\leq C_k$ for a path in $C_k$ with $h$ vertices.
  We have
  \[ |K_n(C_k)|=\binom{n}{k}\frac{(k-1)!}{2};\qquad
  |K_n(P_h)|=\binom{n}{h}\frac{h!}{2}. \]
  By Theorem \ref{mainTG}, $K_n(C_k)$ admits a regular $P_h$--sampling,
  if, and only if,
  \[\lambda=\frac{|K_n(C_k)|}{|K_n(P_h)|}=
  \frac{(n-h)!}{(n-k)!k}\in\NN. \]
  In general,
  even with the existence of a group action compatible with the
  structures involved, it is not easy to actually determine a
  sampling.
  \par
  We write a full example just for $n=7$, $k=4$ and $h=3$; here,
  $\lambda=1$.
  Identify the vertices of $K_7$ with the elements of $\ZZ_7$.
  First, we wish to find a group $G$  which is
  \begin{enumerate}
  \item transitive on $K_7$;
  \item acts in a semiregular way on $K_7(C_4)$ and, possibly,
    on $K_7(P_3)$.
  \end{enumerate}

  Such a group $G$, if it exists, it is necessarily
  isomorphic to $H/\Stab_H(C)$ for some $H\leq S_7$ and
  any $C\in K_7(C_4)$.
  Thus, we need first to determine all
  $H\leq S_7$ normalising at least $\Stab_H(C_4)$.
  A direct computation shows that,
  up to conjugacy, there are just two classes of such
  subgroups:
  \begin{enumerate}[(a)]
  \item one consisting of
    cyclic groups of order $7$ isomorphic to $(\ZZ_7,+)$,
  \item the other containing groups of order $21$ isomorphic to
    $G=\ZZ_7^{\hbox{\tiny$\square$}}\ltimes\ZZ_7$,
    where, for
     $(\alpha,\beta),(\alpha',\beta')\in G$,
     \[ (\alpha,\beta)(\alpha',\beta') = (\alpha\alpha',\beta+\alpha\beta'). \]
     The action of $G$ on $V(K_7)=\ZZ_{7}$ is given by
    \[ (\alpha,\beta)(x)=\alpha x+\beta. \]
  \end{enumerate}
It might be checked that the
action induced by both these groups on $K_7(C_4)$ and
$K_7(P_3)$ is semiregular.

We now describe $2$ different samplings.

First consider the group $(\ZZ_7,+)$.
In this case, a complete system of representatives for $K_7(P_3)$
is given by  the paths
\[ \begin{array}{llllllllll}
  012 & 013 & 014 & 015 & 016 & 023 & 024 & 025 & 026 & 034 \\
  035 & 036 & 045 & 046 & 056. \\
\end{array} \]
Recall that two triples $abc$ and $def$ represent the same
path in $K_7(P_3)$ if, and only if, either $abc=def$ or $abc=fed$.
Suitable representatives for the orbits of $K_7(C_4)$ turn out to be
\[
    \newcommand{\x}[1]{\underbar{#1}}
 \begin{array}{lllll}
  \x{012}6 & \x{013}6 & \x{014}6 & \x{015}2 & \x{016}3 \\
  \x{023}1 & \x{024}5 & \x{025}3 & \x{026}1 & \x{034}6 \\
  \x{035}1 & \x{036}2 & \x{045}1 & \x{046}1 & \x{056}2
  \end{array}
  \]
  The subpath selected by the sampling $\xi$ is
  given, for each representative, by the underlined elements, which
  have to be
  taken in the order they actually appear.
  As above, observe that the elements of $K_7(C_4)$ are not
  sets; in particular, two quadruples $abcd$ and $a'b'c'd'$ represent
  the same graph if, and only if, they belong to the same orbit of
  $\ZZ_7^4$ under the
  action of $D_8$, the dihedral group of order $8$.
  \par
  If the group   $\ZZ_7^{\hbox{\tiny$\square$}}\ltimes\ZZ_7$ is chosen for
  the construction, then a complete system of representatives for
  $K_7(P_3)$ is just
  \[ \begin{array}{lllll}
  012 & 013 & 014 & 015 & 035.
  \end{array} \]
  A related system of representatives for $K_7(C_4)$ is determined as
\[  \newcommand{\x}[1]{\underbar{#1}}
\begin{array}{lllll}
  \x{012}6 & \x{013}6 & \x{014}6 & \x{015}4 & \x{035}1.
\end{array} \]
The image of $0126$, $0136$, $0146$ and $0351$ in the
sampling $\zeta$ induced by these representatives is the same as that
in the sampling $\xi$ described above; however, it is not possible
to choose $\zeta(0152)=015$ again, since $0152$ now
belongs to the orbit of $0146$; thus, its sample is uniquely
determined by $\zeta(0146)=014$ and it must be $\zeta(0152)=025$.
This shows that $G$ is an automorphism group of  $\zeta$,
but not of $\xi$.
\end{example}

If we take both the graphs $\Gamma$ and $\Gamma'$ to be complete,
we may apply Theorem \ref{mainTG} to the study of embeddings and
samplings of complete designs in the classical sense.

We remark that the problem of determining a regular
$K_1$--sampling with redundancy $1$ of $K_n(K_k)$ is exactly that
of determining a system of distinct representatives for the
$k$--subsets of a set of cardinality $n$; see \cite{MHall,PHall}.

For any finite set $S$ of cardinality $n$, denote
by $\binom{S}{k}$ the set of all subsets of $S$ with $k$
elements.
\begin{corollary}
\label{maintS}
There exists a regular $k'$--sampling
\[ \xi:\binom{S}{k}\to\binom{S}{k'} \]
if, and only if, there is $\lambda\in\NN$ such that
\[ \binom{n}{k}=\lambda\binom{n}{k'}. \]
The redundancy of this sampling is $\lambda$.
\end{corollary}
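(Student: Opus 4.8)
The plan is to deduce Corollary \ref{maintS} directly from Theorem \ref{mainTG} by specialising to the case in which both graphs are complete. First I would observe that a $k$--subset of $S$ is the same thing as a copy of $K_k$ on a vertex set of size $n$, so that $\binom{S}{k}$ is in natural bijection with the block set of the complete design $K_n(K_k)$, and similarly $\binom{S}{k'}$ corresponds to $K_n(K_{k'})$. Under this identification, taking a $k'$--subset of a given $k$--subset corresponds exactly to choosing a subgraph isomorphic to $K_{k'}$ inside a copy of $K_k$; hence a regular $k'$--sampling $\xi:\binom{S}{k}\to\binom{S}{k'}$ in the sense of the corollary is literally a regular $K_{k'}$--sampling of $K_n(K_k)$.

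Next I would invoke Theorem \ref{mainTG} with $\Gamma=K_k$ and $\Gamma'=K_{k'}$ (noting $K_{k'}\le K_k\le K_n$ since $k'\le k\le n$, which is implicit from the existence of the subgraph): such a sampling exists if, and only if, there is $\lambda\in\NN$ with $|K_n(K_k)|=\lambda|K_n(K_{k'})|$, and in that case the redundancy equals $\lambda$. The only remaining bookkeeping is to substitute the cardinalities $|K_n(K_k)|=\binom{n}{k}$ and $|K_n(K_{k'})|=\binom{n}{k'}$, which follow immediately from Definition \ref{deD} since the subgraphs of $K_n$ isomorphic to $K_k$ are precisely the $k$--element subsets of the vertex set. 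This turns the divisibility condition into $\binom{n}{k}=\lambda\binom{n}{k'}$, and the statement about redundancy is inherited verbatim from Theorem \ref{mainTG}.

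There is essentially no obstacle here: the corollary is a transcription of the main theorem into the language of set systems, so the ``hard part'' — the matching-theoretic argument producing a bijective sampling $K_n(\Gamma)\to{}^{\lambda}K_n(\Gamma')$ — has already been done in Lemma \ref{tteo} and Theorem \ref{mainTG}. The only point requiring a line of care is making explicit that the correspondence between $k$--subsets and copies of $K_k$ is $\Aut(K_n)$--equivariant and sends the containment relation $K_{k'}\le K_k$ to set inclusion, so that samplings on one side correspond bijectively to samplings on the other with the same fibre sizes; once that is noted, Corollary \ref{maintS} is immediate.
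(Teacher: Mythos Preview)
Your proposal is correct and matches the paper's approach exactly: the paper states this corollary immediately after remarking that taking $\Gamma=K_k$ and $\Gamma'=K_{k'}$ allows one to apply Theorem~\ref{mainTG} to complete designs in the classical sense, and gives no further proof. Your identification of $\binom{S}{k}$ with $K_n(K_k)$ and the substitution $|K_n(K_k)|=\binom{n}{k}$ are precisely the intended translation.
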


\begin{corollary}
  \label{teoH}
  Let $S$ be a finite set with $|S|=n$. Suppose $k\leq\lfloor n/2\rfloor$.
  Then, there exists a bijective sampling
  \[ \xi:\binom{S}{n-k}\to \binom{S}{k}. \]
\end{corollary}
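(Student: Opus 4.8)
The plan is to deduce Corollary~\ref{teoH} directly from Corollary~\ref{maintS} (equivalently, from Theorem~\ref{mainTG}) by checking that the numerical hypothesis holds with $\lambda=1$. Indeed, for $k\le\lfloor n/2\rfloor$ we wish to construct a sampling $\xi:\binom{S}{n-k}\to\binom{S}{k}$, so in the language of Theorem~\ref{mainTG} we take $\Gamma=K_{n-k}\le K_n$ and $\Gamma'=K_k\le K_{n-k}$ (note $k\le n-k$ guarantees $\Gamma'\le\Gamma$). The existence of a \emph{regular} $k$--sampling with redundancy $\lambda$ is equivalent, by Corollary~\ref{maintS}, to the identity $\binom{n}{n-k}=\lambda\binom{n}{k}$ for some $\lambda\in\NN$.

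The key observation is the elementary symmetry $\binom{n}{n-k}=\binom{n}{k}$. Hence the required relation holds with $\lambda=1$, and Corollary~\ref{maintS} yields a regular $k$--sampling $\xi:\binom{S}{n-k}\to\binom{S}{k}$ of redundancy $1$. A sampling of redundancy $1$ is precisely one in which every element of $\binom{S}{k}$ has exactly one preimage; since $\xi$ is by definition surjective and now also injective, it is bijective. This is exactly the assertion of the corollary.

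The one point worth spelling out is the reduction step, namely that $\Gamma'=K_k$ really is a subgraph of $\Gamma=K_{n-k}$, which is where the hypothesis $k\le\lfloor n/2\rfloor$ is used: it ensures $k\le n-k$, so that a $k$--clique embeds into an $(n-k)$--clique and the notion of a $K_k$--sampling of $K_n(K_{n-k})$ makes sense. Apart from this, the argument is immediate. I do not expect a genuine obstacle here; the only thing to be careful about is to phrase the conclusion in terms of $\binom{S}{k}$ rather than $K_n(K_k)$, but this is just the standard identification, already noted in the excerpt just before Corollary~\ref{maintS}, between $k$--subsets of $S$ and subgraphs of $K_n$ isomorphic to $K_k$.

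Alternatively, and perhaps more transparently, one could invoke Corollary~\ref{teoH} as the special case $k'\mapsto k$, $k\mapsto n-k$ of Corollary~\ref{maintS} together with the remark that $\lambda=\binom{n}{n-k}/\binom{n}{k}=1$; the hypothesis $k\le\lfloor n/2\rfloor$ is then exactly what makes $\binom{S}{k}$ the ``smaller'' family so that a sampling (which must shrink blocks) can exist. Either way, the proof is a two--line consequence of the main theorem and the symmetry of binomial coefficients.
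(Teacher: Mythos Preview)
Your proposal is correct and matches the paper's intended argument: the paper states Corollary~\ref{teoH} without proof, immediately after Corollary~\ref{maintS}, so it is meant to follow at once from that result together with the binomial symmetry $\binom{n}{n-k}=\binom{n}{k}$, exactly as you do. Your observation that the hypothesis $k\le\lfloor n/2\rfloor$ is what guarantees $K_k\le K_{n-k}$ (so that the notion of sampling applies) is the only point requiring comment, and you handle it correctly.
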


\begin{remark}
Corollary \ref{maintS} guarantees the existence of a regular $k'$--sampling of
$\binom{S}{k}$
with redundancy $\lambda$ every time the necessary condition
\[ \binom{n}{k}=\lambda\binom{n}{k'} \]
holds; yet our proof is  non--constructive.
\par
In at least some cases, however,
it might possible to write at least
some sampling functions $\xi$ in a more direct way.

The main idea, as before, is to describe both
$\binom{S}{k}$ and $\binom{S}{k'}$
as union of orbits under the action of a suitable  group
$G$, acting on $S$, and determine systems of  representatives
$T$ and $U$
such that:
\begin{enumerate}
\item $T\subseteq \binom{S}{k}$, $U\subseteq\binom{S}{k'}$;
\item $G$ is semiregular on $\binom{S}{k}$;
\item any element of $u\in U$ is a sample of
  ${\lambda}/{|\Stab_G(u)|}$
  elements of $T$;
\item for any $\sigma\in G$ and $t\in T$,
  \[ \sigma(\xi(t))=\xi(\sigma(t)). \]
\end{enumerate}
\end{remark}
\begin{example}
\label{ex1s}
  Suppose $k=3$. We are looking for a regular $2$--sampling
  of $\binom{S}{3}$; thus, $\lambda=(n-2)/3$.
  By Corollary \ref{maintS} such a regular $2$--sampling $\xi$ exists if,
  and only if, $n\equiv 2\pmod 3$.
  Observe that when $n$ is even, $\lambda$ is also even.

  In order to explicitly find $\xi$,
  consider the natural action of
  the cyclic group $(\ZZ_n,+)$ on the set $S=\{0,1,\ldots,n-1\}$:
  for any $\eta\in\ZZ_n$ and $s\in S$, let
  \[ \eta(s)=s+\eta \pmod n. \]
  Fix $v=\lfloor n/2\rfloor$.
  It is easy to show that
  a complete system of representatives for the orbits of
  $\ZZ_n$ on $\binom{S}{3}$ is given by
  either $T=T_1\cup T_2\cup T_3\cup T_4^1$ for $n$ odd,
  or
  $T=T_1\cup T_2\cup T_3\cup T_4^2$ for $n$ even,
  where
  \begin{equation}
    \label{eq3}
  \begin{array}{lll}
    T_1&=&\{ \{0,i,i+t\}\,|\, i=1,\ldots,\lambda+1; t=1,\ldots,\lambda \} \\
    T_2&=&\{ \{0,j,u\}\,|\, j=\lambda+2,\ldots,v-1; u=1,\ldots,j-\lambda-1 \} \\
    T_3&=&\{ \{0,\ell,m\}\,|\, \ell=\lambda+2,\ldots, v-1;
    m=\ell+1,\ldots, 2\lambda+1  \} \\
    T_4^1&=&\{ \{0,v,u\}\,|\, u=1,\ldots,v-\lambda-1,v+1,\ldots,2\lambda+1 \} \\
    T_4^2&=&\{ \{0,v,p\}\,|\, p=1,\ldots,\lambda/2 \}. \\
    \end{array}
  \end{equation}
  A set of representatives for the orbits of $\ZZ_{n}$ on $\binom{S}{2}$
  is just  $U=\{ \{0,x\} : 1\leq x\leq v \}$.
  All orbits of $\ZZ_n$ on $\binom{S}{3}$ have length $n$;
  thus $\ZZ_n$ acts semiregularly on $\binom{S}{3}$.
  When $n$  is odd, it is also true that
  $\Stab_{\ZZ_n}(y)=\{0\}$ for any $y\in U$.
  However, when $n$ is even,
  $\Stab_{\ZZ_n}(y)=\{0\}$ for $y\neq\{0,v\}$
  but
  $\Stab_{\ZZ_n}(y)=\{0,v\}$ when $y=\{0,v\}$.
  We now introduce a function $\widehat{\xi}:T\to U$
  such that  each $y\in U$, $y\neq\{0,v\}$ has
  $\lambda$ preimages in $T$, while $\{0,v\}$ for $v$ even
  has $\lambda/2$ preimages.
  Indeed,
  for each element $\{0,\ell,m\}$ in $T_i$, where $\ell$ and $m$
  are to be  taken in the same order as they appear in \eqref{eq3}, let
  \[ \widehat{\xi}(\{0,\ell,m\})=\{0,\ell\}. \]
  The group $\ZZ_n$ is semiregular on $\binom{S}{3}$ and $T$ is a set
  of representatives for its orbits. Hence, for any
  $\{a,b,c\}\in \binom{S}{3}$ there are
  unique $\{0,\ell,m\}\in T$ and  $\eta\in\ZZ_n$
   such that
  $\eta(\{0,\ell,m\})=\{a,b,c\}$.
  Thus, the definition
    \[ \xi(\{a,b,c\})=
    \xi(\eta(\{0,\ell,m\}))=\eta(\widehat{\xi}(\{0,\ell,m\})=
    \eta(\{0,\ell\})=\{\eta,\ell+\eta\} \]
    is well posed.
    We claim that $\xi$ is a regular $2$--sampling of $\binom{S}{3}$.
    Since for any
    $\{s,t\}\in \binom{S}{2}$ there exists $\eta\in\ZZ_n$ such that
    $\{s,t\}=\eta(\{0,\ell\})$,  in order to show that
    $\xi$ is a sampling we just need to prove that the
    number of preimages in $\binom{S}{3}$ of $\{0,\ell\}$ under
    $\widehat{\xi}$ is exactly $\lambda$.
    Observe that for $n$ odd or $\ell\neq v$,
    the only preimages of $\{0,\ell\}$
    are elements of $T$; thus, we have to analyse the following
    cases:
    \begin{enumerate}
    \item for $1<\ell\leq\lambda+1$, the set
      $\{0,\ell\}$ has $\lambda$ preimages in the class $T_1$
      and none in $T_2$, $T_3$, $T_4^1$ or $T_4^2$ and we are done;
    \item for $\lambda+2\leq\ell< v$, the set $\{0,\ell\}$ has
      $\ell-\lambda-1$ preimages in $T_2$ and
      $2\lambda+1-\ell$ preimages in $T_3$, for a total of
      $\lambda$;
    \item if $\ell=v$ and $n$ is odd, then $\{0,\ell\}$
      has $\lambda$ preimages in $T_4^1$.
    \end{enumerate}
    In $n$ is even and $\ell=v$, then each orbit of an
    element of $T_4^2$ contains two preimages of $\{0,v\}$;
    since $|T_4^2|=\lambda/2$, we get that also in this
    case the total number of preimages is $\lambda$.
    It follows that $\xi$ is, as requested, a regular
    $2$--sampling for $\binom{S}{3}$.

    We now show how this construction might be used for some small cases:
    \begin{enumerate}
    \item for $n=14$ we have $\lambda=4$, $v=7$. The set $T$ is as follows:
        \[
        \begin{array}{l@{:}l}
          T_1 &
            \begin{array}[t]{llllllllll}
          \underline{01}2 & \underline{01}3 & \underline{01}4 & \underline{01}5
          & \underline{02}3 &
          \underline{02}4 & \underline{02}5 & \underline{02}6 &
          \underline{03}4 & \underline{03}5 \\
          \underline{03}6 & \underline{03}7 &
          \underline{04}5 & \underline{04}6 &
          \underline{04}7 &
          \underline{04}8 &
          \underline{05}6 & \underline{05}7 &
          \underline{05}8 & \underline{05}9
          \end{array}
            \\
            T_2 &
            \begin{array}{lll}
              \underline{06}7 &  \underline{06}8 & \underline{06}9
            \end{array} \\
            T_3 &
            \begin{array}{l} \underline{06}1 \end{array} \\
            T_4^2 &
            \begin{array}{ll}
              \underline{07}1 &
              \underline{07}2 \end{array}
            \end{array}
            \]
            The underlined elements in the preceding table are
            the image under $\widehat{\xi}$ of the corresponding set.
          \item for $n=17$ we have $\lambda=5$, $v=8$.
            We describe $T$ and $\widehat{\xi}$ as before.
            In the following table, by $a$ and $b$ we respectively mean
            $10$ and $11$.
        \[
        \begin{array}{l@{:}l}
        T_1 &   \begin{array}[t]{llllllllll}
          \underline{01}2 & \underline{01}3 & \underline{01}4 &
          \underline{01}5 & \underline{01}6 & \underline{02}3 &
          \underline{02}4 & \underline{02}5 & \underline{02}6 &
          \underline{02}7 \\
          \underline{03}4 & \underline{03}5 & \underline{03}6 &
          \underline{03}7 & \underline{03}8 & \underline{04}5 &
          \underline{04}6 & \underline{04}7 & \underline{04}8 &
          \underline{04}9 \\
          \underline{05}6 & \underline{05}7 & \underline{05}8 &
          \underline{05}9 & \underline{05}a & \underline{06}7 &
          \underline{06}8 & \underline{06}9 & \underline{06}a &
          \underline{06}b
        \end{array} \\
        T_2 & \begin{array}{lllllll}
          \underline{07}8 & \underline{07}9 & \underline{07}a &
          \underline{07}b \\
        \end{array} \\
        T_3 & \begin{array}[t]{l}
          \underline{07}1 \\
        \end{array} \\
        T_4^1 & \begin{array}{lllll}
          \underline{08}1 & \underline{08}2 &
          \underline{08}9 & \underline{08}a & \underline{08}b
        \end{array}
      \end{array}
      \]
    \end{enumerate}
  \end{example}
  \begin{example}
    \label{ex2}
    We wish to determine a regular $3$--sampling $\xi$ of $\binom{S}{4}$.
    In this case,
    $k=4, \lambda=5$. To construct $\xi$ it is convenient to describe
    $\binom{S}{4}$ as union of orbits of a fairly large semiregular group.
    We may consider the group
    \[ G=\ZZ_n^{\hbox{\tiny$\square$}}\ltimes\ZZ_n. \]
    For $n$ a prime with $n\equiv 11\pmod{12}$,
    simple, but tedious, computations show that the
    action of $G$ is semiregular on both $\binom{S}{3}$ and $\binom{S}{4}$.
    The smallest interesting case occurs for $n=23$.
    Here, a system of representatives for the orbits of $\binom{S}{3}$ is
    \[
    U=\{
    \begin{array}{lllllll}
      0,1,2 & 0,1,3 & 0,1,4 & 0,1,5 & 0,1,7 & 0,1,9 & 0,1,13
    \end{array}\}.
    \]
    By a computer assisted search with \cite{GAP4},
    we determined the following
    system compatible with $U$ of representatives for
    the orbits of $\binom{S}{4}$:
    \[
    \newcommand{\x}[1]{\underbar{#1}}
    \begin{array}{llllll}
      \x{0,1,2},5 & \x{0,1,2},7 & \x{0,1,2},10 & \x{0,1,2},11 & \x{0,1,2},14 \\
      \x{0,1,3},7 & \x{0,1,3},15 & \x{0,1,3},19 & \x{0,1,3},21 & \x{0,1,3},22 \\
      \x{0,1,4},5 & \x{0,1,4},7  & \x{0,1,4},11 & \x{0,1,4},15 & \x{0,1,4},17 \\
      \x{0,1,5},6 & \x{0,1,5},14 & \x{0,1,5},15 & \x{0,1,5},20 & \x{0,1,5},22 \\
      \x{0,1,7},5 & \x{0,1,7},9  & \x{0,1,7},10 & \x{0,1,7},21 & \x{0,1,7},22 \\
      \x{0,1,9},2 & \x{0,1,9},5  & \x{0,1,9},8  & \x{0,1,9},16 & \x{0,1,9},20 \\
      \x{0,1,13},3& \x{0,1,13},5 & \x{0,1,13},7 & \x{0,1,13},9 & \x{0,1,13},12.
    \end{array}
    \]
    The sampling map $\widehat{\xi}$ is defined as in the previous example.
    \par
    We remark that, instead of the
    group $G$, we might also have  considered
    the action of the cyclic group $\ZZ_{23}$. However, had this
    been the case, we would have needed to write
    $77$ distinct representatives for the
    orbits of $\binom{S}{3}$ and $385$ compatible representatives for the
    orbits of $\binom{S}{4}$.
  \end{example}
  \begin{remark}
    If $\binom{S}{k}$ admits a regular $k_1$--sampling $\xi_1$
    with redundancy $\lambda_1$ and $\binom{S}{k_1}$ admits, in turn, a regular
    $k_2$--sampling $\xi_2$ with redundancy $\lambda_2$, then
    $\xi=\xi_2\xi_1$ is a regular $k_2$--sampling of $\binom{S}{k}$
    redundancy $\lambda_1\lambda_2$,
    since any $x_2\in \binom{S}{k_2}$ is a sample of $\lambda_2$ elements
    of $\binom{S}{k_1}$, while, on the other hand, any $x_1\in \binom{S}{k_1}$ is a sample
    of $\lambda_1$ elements of $\binom{S}{k}$.
    However, it has to be stressed
    that not all $k_2$--samplings of $\binom{S}{k}$
    arise in this way.
  \end{remark}
  \begin{example}
    For $n=11$,  the necessary condition for the existence
    of a regular $2$--sampling of $\binom{S}{3}$ as well as that
    for the existence
    of a regular $3$--sampling of $\binom{S}{4}$ are simultaneously fulfilled.
    In particular, it is possible
    to construct a regular $2$--sampling $\xi$ to $\binom{S}{4}$ with
    redundancy $\lambda=6$
    as composition of a regular $2$--sampling $\xi_1$ of $\binom{S}{3}$ and a
    regular $3$--sampling $\xi_2$ of $\binom{S}{4}$.
    We consider the action of the group $G$ introduced in Example
    \ref{ex2}. As before, $G$ is semiregular on $\binom{S}{4}$ and
    $\binom{S}{3}$.
    Furthermore, a direct computation
    shows that $G$ is regular on $\binom{S}{2}$.
    We provide suitable systems of representatives for, respectively, $\binom{S}{2}$,
    $\binom{S}{3}$ and $\binom{S}{4}$:
    \[
    \newcommand{\x}[1]{\underbar{#1}}
    \begin{array}{l@{:}l}
      U_2&  \begin{array}{l} 01 \end{array}   \\
      U_3&  \begin{array}{lll} \x{01}2 & \x{01}3 & \x{01}5 \end{array}  \\
      U_4& \begin{array}{llllll}
        \x{012}3 & \x{012}4 & \x{013}5 & \x{013}7 & \x{015}4 &
        \x{015}8.
      \end{array}
    \end{array} \]
    The samplings arise, as in the previous examples,
    from $\widehat{\xi_1}:U_3\to U_2$ and $\widehat{\xi_2}:U_4\to U_3$.
    It is immediate to see that $\widehat{\xi}=\widehat{\xi_1}\widehat{\xi_2}$
    is a regular $2$--sampling of $\binom{S}{4}$.
    On the other hand, it is possible to define a regular $2$--sampling of
    $\binom{S}{4}$ also from the starter set
    \[
    \newcommand{\x}[1]{\underbar{#1}}
    \begin{array}{l@{:}l}
    U_4& \begin{array}{llllll} \x{01}23 & \x{01}24 & \x{01}25
      & \x{01}26 & \x{01}28 & \x{01}34
      \end{array}
    \end{array}. \]
      However, it is not possible to extract  a regular
      $3$--sampling (with redundancy $2$) from $U_4$.
  \end{example}

The notion of $(K_n,\Gamma)$--complete design can be further
generalized to that of a $(\Delta,\Gamma)$--complete design, where
$\Delta$ is an arbitrary graph. As before, given
$\Gamma'\leq\Gamma\leq\Delta$, we might want to investigate the
existence of a regular $\Gamma'$--sampling of $\Delta(\Gamma)$ or,
conversely, an embedding of $\Delta(\Gamma')$ into
$\Delta(\Gamma)$. However, in this general case,  Lemma \ref{tteo}
fails, since  it is not possible to guarantee that $\Aut(\Delta)$
acts transitively on the blocks of $\Delta(\Gamma)$ and
$\Delta(\Gamma')$; hence it is not possible to get an analogue of
Theorem \ref{mainTG}. The following example contains a case in
which a sampling might be shown to exist (and at least one of
these samplings can be determined in an independent way).

\begin{example}
\label{PG}
  Let $q$ be any prime power, and consider the projective
  space $PG(3,q)$. Call $\hat{\Delta}$ the bipartite
  point--line incidence
  graph of this geometry.
  Let $\widehat{\Gamma}$ be the point--line incidence graph
  of $PG(2,q)$, seen as a plane embedded into $PG(3,q)$.
  Clearly $\widehat{\Gamma}<\widehat{\Delta}$.
  Define two new graphs $\Delta$ and $\Gamma$ by
  replacing in $\widehat{\Delta}$ and $\widehat{\Gamma}$
  each vertex $v$ corresponding to a point of $PG(3,q)$
  with a triangle $vv'v''$.
  Let $\Gamma'$ be a triangle $C_3$. We observe
  that $\Delta(\Gamma')$ corresponds to the set of the
  points of $PG(3,q)$. Since there are as many points
  in $PG(3,q)$ as planes, we have
  $|\Delta(\Gamma')|=|\Delta(\Gamma)|$.
  Furthermore, the full automorphism group of $\Delta$
  contains a subgroup isomorphic to $PGL(3,q)$, which
  acts in a transitive way on $\Delta(\Gamma)$ and
  also $\Delta(\Gamma')$. Thus, we may apply the
  same techniques as in the proof of Lemma \ref{tteo},
  as to obtain a regular matching of $\Delta$,
  as in Theorem \ref{mainTG}. This gives a sampling
  that associates to
  every plane $\pi$ of $PG(3,q)$ a point $p\in\pi$.
  \par
  There are several possible matchings
  of this kind; one of these is
  given
  by the action of a symplectic polarity $\sigma$
  of $PG(3,q)$.
\end{example}

\end{document}